\newtheorem{theorem}{Theorem}[section]
\newtheorem{lemma}[theorem]{Lemma}
\newtheorem{proposition}[theorem]{Proposition}
\theoremstyle{definition}
\theoremstyle{notation}
\newtheorem{definition}[theorem]{Definition}
\theoremstyle{remark}
\numberwithin{equation}{section}
\begin{document}

\title[A pseudocompactification]{A pseudocompactification}

\author[M.R. Koushesh]{M.R. Koushesh}

\address{Department of Mathematical Sciences, Isfahan University of Technology, Isfahan 84156--83111, Iran}

\address{School of Mathematics, Institute for Research in Fundamental Sciences (IPM), P.O. Box: 19395--5746, Tehran, Iran.}

\email{koushesh@cc.iut.ac.ir}

\subjclass[2010]{54D35, 54D40, 54D60.}

\keywords{Stone-\v{C}ech compactification; Pseudocompactification; Local pseudocompactness; Local compactness; Hewitt realcompactification; $z$-ultrafilter.}

\thanks{This research  was in part supported by a grant from IPM (No. 90030052).}

\begin{abstract}
For a locally pseudocompact space $X$ let
\[\zeta X=X\cup\mbox{cl}_{\beta X}(\beta X\backslash\upsilon X).\]
It is proved that $\zeta X$ is the largest (with respect to the standard partial order $\leq$) among all pseudocompactifications of $X$ which have compact remainder. Other characterizations of $\zeta X$ are also given.
\end{abstract}

\maketitle

\section{Introduction}

A space $Y$ is called an {\em extension} of a space $X$ if $Y$ contains $X$ as a dense subspace. If $Y$ is an extension of $X$ then the subspace $Y\backslash X$ of $Y$ is called the {\em remainder} of $Y$. Two extensions of $X$ are said to be {\em equivalent} if there exists a homeomorphism between them which fixes $X$ pointwise. This defines an equivalence relation on the class of all extensions of $X$. The equivalence classes will be identified with individuals. Pseudocompact extensions are called {\em pseudocompactifications}.

Let $X$ be a Tychonoff non-pseudocompact space. In \cite{AS}, C.E. Aull and J.O. Sawyer have considered the pseudocompactification
\[\alpha X=X\cup(\beta X\backslash\upsilon X)\]
of $X$ and they have studied its characterizations among pseudocompactifications of $X$ contained in $\beta X$. Specifically, they have proved that $\alpha X$ is the smallest pseudocompactification $Y$ of $X$ contained in $\beta X$ such that every free hyper-real $z$-ultrafilter in $X$ converges in $Y$, and $\alpha X$ is the largest pseudocompactification $Y$ of $X$ contained in $\beta X$ such that every point of $Y\backslash X$ is contained in a zero-set of $Y$ which misses $X$. (A $z$-ultrafilter in $X$ is said to be {\em real} if it has the countable intersection property; otherwise, it is called {\em hyper-real}.)

Here in this note, motivated by the results of \cite{AS} and our previous work \cite{Ko3} (also \cite{Ko4}), for a Tychonoff space $X$ we consider the subspace
\[\zeta X=X\cup\mbox{cl}_{\beta X}(\beta X\backslash\upsilon X)\]
of $\beta X$. We show that if $X$ is locally pseudocompact, $\zeta X$ is the largest (with respect to the standard partial order $\leq$) of all pseudocompactifications of $X$ which have compact remainder. We give other characterizations of $\zeta X$, including a characterization of $\zeta X$ via $z$-ultrafilter in $X$.

We denote by ${\mathscr Z}(X)$ and $Coz(X)$ the set of all zero-sets and the set of all cozero-sets of a space $X$, respectively. As usual, we denote by $\beta X$ and $\upsilon X$ the Stone-\v{C}ech compactification and the Hewitt realcompactification of a space $X$, respectively. We refer to \cite{E}, \cite{GJ}, \cite{PW} and \cite{W} for undefined terms and notation and background materials.

\section {The definition of $\zeta X$}

In this section we formally define $\zeta X$ and consider the cases when it takes on the familiar forms $\zeta X=X$, $\zeta X=\beta X$ and $\zeta X=\alpha X$.

\begin{definition}\label{TR}
For a Tychonoff space $X$ let
\[\zeta X=X\cup\mbox{\em cl}_{\beta X}(\beta X\backslash\upsilon X)=X\cup(\beta X\backslash\mbox{\em int}_{\beta X}\upsilon X)\]
considered as a subspace of $\beta X$.
\end{definition}

Note that $\zeta X$ is always pseudocompact, as it densely contains the pseudocompactification $\alpha X$.

The following result is due to  A.W. Hager and D.G. Johnson in \cite{HJ}; a direct proof may be found in \cite{C}. (See also Theorem 11.24 of \cite{W}.)

\begin{lemma}[Hager-Johnson \cite{HJ}]\label{A}
Let $U$ be an open subset of the Tychonoff space $X$. If $\mbox{\em cl}_{\upsilon X} U$ is compact then $\mbox{\em cl}_X U$ is pseudocompact.
\end{lemma}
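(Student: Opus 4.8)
The plan is to prove the contrapositive by contradiction, using the standard reformulation of pseudocompactness in terms of locally finite families of open sets together with the two defining features of $\upsilon X$: that $X$ is dense and $C$-embedded in it, and that it lies inside $\beta X$. So assume $\mathrm{cl}_X U$ is \emph{not} pseudocompact. Since $\mathrm{cl}_X U$ is Tychonoff, it then carries an infinite family $\{V_n:n\in\mathbb{N}\}$ of nonempty open subsets which is locally finite in $\mathrm{cl}_X U$.

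Next I would push this family into $X$ through $U$. Because $U$ is open in $X$ and dense in $\mathrm{cl}_X U$, the sets $W_n=V_n\cap U$ are nonempty and open in $X$; and since $\mathrm{cl}_X U$ is closed in $X$ and each $W_n$ lies inside it, the family $\{W_n\}$ remains locally finite \emph{in $X$} (a point outside $\mathrm{cl}_X U$ has a neighbourhood missing $\mathrm{cl}_X U$ altogether, while at a point of $\mathrm{cl}_X U$ one intersects a suitable relative neighbourhood with an ambient open set). Then I would fix $p_n\in W_n$ and, by complete regularity of $X$, pick $f_n\colon X\to[0,1]$ continuous with $f_n(p_n)=1$ and $f_n\equiv 0$ off $W_n$. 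Local finiteness of $\{W_n\}$ makes $f=\sum_{n\in\mathbb{N}}nf_n$ a well-defined element of $C(X)$, and $f(p_n)\ge n$ for every $n$, since all summands are nonnegative.

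Finally I would derive the contradiction in $\upsilon X$. As $X$ is $C$-embedded in $\upsilon X$, the function $f$ extends to some $f^{\upsilon}\in C(\upsilon X)$. By hypothesis $\mathrm{cl}_{\upsilon X}U$ is compact, so $f^{\upsilon}$ is bounded on it, say $|f^{\upsilon}|\le M$ there. But each $p_n$ lies in $U\subseteq\mathrm{cl}_{\upsilon X}U$ and $f^{\upsilon}(p_n)=f(p_n)\ge n$, which is impossible as soon as $n>M$. Hence $\mathrm{cl}_X U$ is pseudocompact.

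The crux — and the step I expect to demand the most care — is the transfer in the second paragraph. The failure of pseudocompactness of $\mathrm{cl}_X U$ should not be carried over to $\upsilon X$ as a locally finite family, since local finiteness is fragile under enlarging the ambient space; instead it should be encoded as a single function of $C(X)$ whose values are already large on points $p_n$ that sit in $\mathrm{cl}_{\upsilon X}U$ from the outset, so that compactness there forces a bound the unbounded function cannot respect. The one genuinely technical verification is that $\{W_n\}$ stays locally finite in all of $X$ (this is exactly what guarantees continuity of $\sum_{n} n f_n$ on $X$), and it is precisely here that the hypothesis of working with the closure of an \emph{open} set enters, making $\mathrm{cl}_X U$ closed in $X$ with $U$ dense in it.
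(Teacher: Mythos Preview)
The paper does not prove this lemma; it attributes the result to Hager--Johnson \cite{HJ} and refers the reader to \cite{C} and Theorem~11.24 of \cite{W} for a direct argument. Your proof is correct and is essentially the standard direct argument one finds in those references: encode the failure of pseudocompactness of $\mathrm{cl}_X U$ as a countable locally finite family of nonempty open sets, push it into $U$ (using that $U$ is open in $X$ and dense in $\mathrm{cl}_X U$), build an unbounded $f\in C(X)$ whose unboundedness is witnessed inside $U$, and contradict compactness of $\mathrm{cl}_{\upsilon X}U$ via the extension $f^{\upsilon}\in C(\upsilon X)$. Your check that $\{W_n\}$ remains locally finite in all of $X$ is the one place care is needed, and you handle both cases (points outside and inside $\mathrm{cl}_X U$) correctly.
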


\begin{lemma}\label{HGA}
Let $A$ be a regular closed subset of the Tychonoff space $X$. Then $\mbox{\em cl}_{\beta X} A\subseteq\upsilon X$ if and only if $A$ is pseudocompact.
\end{lemma}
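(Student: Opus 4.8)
The plan is to prove the two implications separately, exploiting that $A$ being regular closed means $A=\mathrm{cl}_X U$ for the open set $U=\mathrm{int}_X A$.

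For the direction ``$\mathrm{cl}_{\beta X}A\subseteq\upsilon X$ implies $A$ pseudocompact'', the idea is to push the hypothesis into $\upsilon X$ and then quote the Hager--Johnson lemma. Since $A\subseteq X\subseteq\upsilon X\subseteq\beta X$, the subspace identity $\mathrm{cl}_{\upsilon X}A=\mathrm{cl}_{\beta X}A\cap\upsilon X$ combined with the assumption gives $\mathrm{cl}_{\upsilon X}A=\mathrm{cl}_{\beta X}A$, which is compact, being a closed subset of $\beta X$. As $U\subseteq A$, the set $\mathrm{cl}_{\upsilon X}U$ is then a closed subspace of the compact space $\mathrm{cl}_{\upsilon X}A$, hence itself compact, and Lemma \ref{A} applied to $U$ yields that $A=\mathrm{cl}_X U$ is pseudocompact. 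This is the step carrying the real content; the rest of this direction is just bookkeeping with closures and compactness.

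For the converse I would argue by contradiction (and note that here only pseudocompactness of $A$ is used, not regular closedness). Suppose $A$ is pseudocompact but some $p\in\mathrm{cl}_{\beta X}A$ fails to lie in $\upsilon X$. By the standard description of $\upsilon X$ inside $\beta X$ (see \cite{GJ}), there is then a zero-set $Z$ of $\beta X$ with $p\in Z$ and $Z\cap X=\emptyset$; write $Z=g^{-1}(0)$ with $g\in C(\beta X)$ and $0\le g\le 1$. Since $Z\cap X=\emptyset$ we have $g>0$ on $X$, hence on $A$, so $1/(g|_A)\in C(A)$. Because $g(p)=0$ and $p\in\mathrm{cl}_{\beta X}A$, each neighborhood $g^{-1}([0,1/n))$ of $p$ meets $A$, producing points of $A$ at which $1/g$ exceeds $n$; thus $1/(g|_A)$ is an unbounded element of $C(A)$, contradicting pseudocompactness of $A$. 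Hence $\mathrm{cl}_{\beta X}A\subseteq\upsilon X$, and the proof is complete.

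I expect the only genuinely non-formal ingredients to be the invocation of Lemma \ref{A} in the first direction and, in the second direction, the recollection that the points of $\beta X\setminus\upsilon X$ are exactly those lying in some zero-set of $\beta X$ disjoint from $X$; once that is in hand, the unbounded-function argument is routine.
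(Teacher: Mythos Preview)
Your proof is correct. The first direction matches the paper's (the paper merely says ``follows from Lemma~\ref{A}'' without spelling out the closure bookkeeping you carry out, but the content is the same).

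For the converse you take a genuinely different route. You argue by contradiction: from a point $p\in\mathrm{cl}_{\beta X}A\setminus\upsilon X$ you pick a zero-set of $\beta X$ through $p$ missing $X$ and manufacture an unbounded element of $C(A)$. The paper instead observes that if $A$ is pseudocompact then so is $\mathrm{cl}_{\upsilon X}A$ (since $A$ is dense in it), and $\mathrm{cl}_{\upsilon X}A$, being closed in the realcompact space $\upsilon X$, is realcompact; a pseudocompact realcompact space is compact, whence $\mathrm{cl}_{\beta X}A\subseteq\mathrm{cl}_{\upsilon X}A\subseteq\upsilon X$. The paper's argument is shorter and more conceptual, leaning on the slogan ``pseudocompact $+$ realcompact $\Rightarrow$ compact'', while yours is more elementary in that it avoids invoking realcompactness altogether and works directly with the zero-set description of $\beta X\setminus\upsilon X$. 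Both arguments, as you noted, use only pseudocompactness of $A$ in this direction; regular closedness is needed only for the first implication.
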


\begin{proof}
The first half follows from Lemma \ref{A}. For the second half, note that if $A$ is pseudocompact then so is $\mbox{cl}_{\upsilon X} A$. But  $\mbox{cl}_{\upsilon X} A$, being closed in $\upsilon X$, is also realcompact, and thus compact. Therefore $\mbox{cl}_{\beta X} A\subseteq\mbox{cl}_{\upsilon X} A$.
\end{proof}

For an open subset $U$ of the Tychonoff space $X$ denote
\[\mbox{Ex}_XU=\beta X\backslash\mbox {cl}_{\beta X}(X\backslash U).\]
Note that $\mbox{Ex}_XU$  is open in $\beta X$ and $X\cap\mbox{Ex}_XU=U$.

The following lemma is motivated by Lemma 2.17 of \cite{Ko4}.

\begin{lemma}\label{JIUH}
Let $X$ be a Tychonoff  space. Then
\[\mbox{\em int}_{\beta X}\upsilon X=\bigcup\big\{\mbox{\em Ex}_X C:C\in Coz(X)\mbox{ and }\mbox{\em cl}_X C \mbox{ is pseudocompact}\big\}.\]
\end{lemma}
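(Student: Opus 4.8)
The plan is to establish the two inclusions of the asserted equality separately. The only substantial inputs will be the Hager--Johnson lemma (Lemma~\ref{A}) and its consequence Lemma~\ref{HGA}, together with the two elementary facts recorded just before the statement: $\mathrm{Ex}_XU$ is open in $\beta X$ and $X\cap\mathrm{Ex}_XU=U$. I will also use repeatedly that any open subset $V$ of $\beta X$ satisfies $V\subseteq\mathrm{cl}_{\beta X}(V\cap X)$ because $X$ is dense in $\beta X$; applied to $V=\mathrm{Ex}_XC$ this gives $\mathrm{Ex}_XC\subseteq\mathrm{cl}_{\beta X}(\mathrm{Ex}_XC\cap X)=\mathrm{cl}_{\beta X}C$.

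For the inclusion $\supseteq$, I would fix $C\in Coz(X)$ with $\mathrm{cl}_XC$ pseudocompact. Since $\mathrm{cl}_XC$ is the closure of the open set $C$ it is regular closed in $X$, so Lemma~\ref{HGA} applies and yields $\mathrm{cl}_{\beta X}C=\mathrm{cl}_{\beta X}(\mathrm{cl}_XC)\subseteq\upsilon X$. Combining this with $\mathrm{Ex}_XC\subseteq\mathrm{cl}_{\beta X}C$ gives $\mathrm{Ex}_XC\subseteq\upsilon X$, and as $\mathrm{Ex}_XC$ is open in $\beta X$ it follows that $\mathrm{Ex}_XC\subseteq\mathrm{int}_{\beta X}\upsilon X$. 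Taking the union over all such $C$ yields one inclusion.

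For the reverse inclusion $\subseteq$, I would take $p\in\mathrm{int}_{\beta X}\upsilon X$ and manufacture a witnessing cozero-set. Using complete regularity of $\beta X$, choose a continuous $f\colon\beta X\to[0,1]$ with $f(p)=1$ and $f$ vanishing off $\mathrm{int}_{\beta X}\upsilon X$, and set $W=\{x\in\beta X:f(x)>1/2\}$ and $C=W\cap X$, which is a cozero-set of $X$ since $f|_X$ is continuous. Then $\mathrm{cl}_{\beta X}C\subseteq\mathrm{cl}_{\beta X}W\subseteq\{f\ge 1/2\}\subseteq\mathrm{int}_{\beta X}\upsilon X\subseteq\upsilon X$, so $\mathrm{cl}_{\beta X}C$ is compact and coincides with $\mathrm{cl}_{\upsilon X}C$; Lemma~\ref{A}, applied to the open set $C$ of $X$, then shows $\mathrm{cl}_XC$ is pseudocompact. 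Finally $X\backslash C\subseteq\beta X\backslash W\subseteq\{f\le 1/2\}$, which is closed in $\beta X$ and does not contain $p$, so $p\notin\mathrm{cl}_{\beta X}(X\backslash C)$, i.e.\ $p\in\mathrm{Ex}_XC$. This gives the reverse inclusion; the degenerate case $\mathrm{int}_{\beta X}\upsilon X=\emptyset$ requires no separate treatment, since the empty cozero-set already witnesses the empty union.

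I do not expect a genuine obstacle here, but there are a few points that must be handled with care: in the first inclusion one must recognize $\mathrm{cl}_XC$ as regular closed before invoking Lemma~\ref{HGA}; in the second inclusion the shrinking from $\{f>0\}$ to $\{f>1/2\}$ is precisely what forces $\mathrm{cl}_{\beta X}C$ inside $\upsilon X$, so that $\mathrm{cl}_{\upsilon X}C=\mathrm{cl}_{\beta X}C$ is compact and Lemma~\ref{A} becomes applicable; and the identity $\mathrm{Ex}_XC\cap X=C$ is used tacitly when passing between $C$ and $X\backslash C$.
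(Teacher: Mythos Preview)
Your proposal is correct and follows essentially the same route as the paper's proof: both inclusions are handled exactly as you describe, using Lemma~\ref{HGA} for $\supseteq$ and a Urysohn-type function on $\beta X$ (with a shrink from $\{f>0\}$ to a level set at $1/2$) for $\subseteq$. The only cosmetic differences are that the paper swaps the roles of $0$ and $1$ in the choice of $f$, and in the reverse inclusion it cites Lemma~\ref{HGA} rather than unpacking it back to Lemma~\ref{A} as you do.
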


\begin{proof}
If $C\in Coz(X)$ has pseudocompact closure then $\mbox{cl}_{\beta X} C\subseteq\upsilon X$, by Lemma \ref{HGA}. But then $\mbox{Ex}_X C\subseteq \mbox{int}_{\beta X}\upsilon X$, as $\mbox{Ex}_X C\subseteq\mbox{cl}_{\beta X} C$.

For the reverse inclusion, let $t\in\mbox{int}_{\beta X}\upsilon X$. Let $f:\beta X\rightarrow[0,1]$ be continuous with $f(t)=0$ and $f|(\beta X\backslash\mbox{int}_{\beta X}\upsilon X)\equiv 1$. Then $C=X\cap f^{-1}[[0,1/2)]$ is a cozero-set of $X$ with $t\in\mbox{Ex}_X C$. Also, $\mbox{cl}_X C$ is pseudocompact, by Lemma \ref{HGA}.
\end{proof}

\begin{lemma}\label{BA27}
Let $X$ be a Tychonoff space and let $Z\in{\mathscr Z}(X)$. Then $\mbox{\em cl}_{\beta X}Z\subseteq\mbox{\em int}_{\beta X}\upsilon X$ if and only if $Z$ is contained in a cozero-set of $X$ with pseudocompact closure.
\end{lemma}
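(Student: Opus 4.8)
The plan is to prove the two implications separately, using Lemma~\ref{JIUH} together with the compactness of $\mbox{cl}_{\beta X}Z$ in $\beta X$.

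For the sufficiency, I would start from a cozero-set $C$ of $X$ with $Z\subseteq C$ and $\mbox{cl}_XC$ pseudocompact. Since $C\in Coz(X)$, the set $X\backslash C$ is a zero-set of $X$, and it is disjoint from the zero-set $Z$; hence $Z$ and $X\backslash C$ are completely separated in $X$, so their closures in $\beta X$ are disjoint. This gives
\[\mbox{cl}_{\beta X}Z\subseteq\beta X\backslash\mbox{cl}_{\beta X}(X\backslash C)=\mbox{Ex}_XC,\]
and by Lemma~\ref{JIUH} the set $\mbox{Ex}_XC$ is contained in $\mbox{int}_{\beta X}\upsilon X$. (Applying Lemma~\ref{HGA} to the regular closed set $\mbox{cl}_XC$ only yields $\mbox{cl}_{\beta X}Z\subseteq\upsilon X$; the complete-separation step is precisely what upgrades this to an inclusion in the interior.)

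For the necessity, assume $\mbox{cl}_{\beta X}Z\subseteq\mbox{int}_{\beta X}\upsilon X$. By Lemma~\ref{JIUH} the right-hand side is the union of all $\mbox{Ex}_XD$ with $D\in Coz(X)$ and $\mbox{cl}_XD$ pseudocompact. As $\mbox{cl}_{\beta X}Z$ is a closed subset of the compact space $\beta X$, it is compact, hence covered by finitely many such sets, say $\mbox{Ex}_XD_1,\dots,\mbox{Ex}_XD_n$. I would then set $C=D_1\cup\cdots\cup D_n$. Then $C\in Coz(X)$, and $\mbox{cl}_XC=\mbox{cl}_XD_1\cup\cdots\cup\mbox{cl}_XD_n$ is a finite union of pseudocompact subspaces, hence pseudocompact (a continuous real-valued function on the union restricts to a bounded function on each of the finitely many pieces, so it is bounded). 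Finally, intersecting the inclusion $\mbox{cl}_{\beta X}Z\subseteq\mbox{Ex}_XD_1\cup\cdots\cup\mbox{Ex}_XD_n$ with $X$ and using $X\cap\mbox{Ex}_XD_i=D_i$ together with $X\cap\mbox{cl}_{\beta X}Z=\mbox{cl}_XZ=Z$ gives $Z\subseteq C$, as desired.

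I do not anticipate a genuine obstacle here. The only two points needing a moment's care are the passage from ``$\subseteq\upsilon X$'' to ``$\subseteq\mbox{int}_{\beta X}\upsilon X$'' in the sufficiency — handled by the complete-separation observation above, or equivalently by quoting Lemma~\ref{JIUH} directly — and the routine fact that a finite union of pseudocompact subspaces of a Tychonoff space is again pseudocompact.
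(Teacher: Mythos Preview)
Your proof is correct and follows essentially the same route as the paper's: both directions hinge on Lemma~\ref{JIUH}, with the sufficiency using that disjoint zero-sets of $X$ have disjoint closures in $\beta X$ to get $\mbox{cl}_{\beta X}Z\subseteq\mbox{Ex}_XC$, and the necessity using compactness of $\mbox{cl}_{\beta X}Z$ to extract a finite subcover and then taking the union of the corresponding cozero-sets. Your version merely spells out a few details (the finite-union-of-pseudocompact step, the intersection with $X$ to recover $Z\subseteq C$) that the paper leaves implicit.
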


\begin{proof}
If $Z\subseteq C$, where $C\in Coz(X)$ has pseudocompact closure, then
\[\mbox{cl}_{\beta X}Z\cap\mbox{cl}_{\beta X}(X\backslash C)=\emptyset\]
as $Z$ and $X\backslash C$ are disjoint zero-sets of $X$. Thus, by Lemma \ref{JIUH}
\[\mbox{cl}_{\beta X}Z\subseteq\beta X\backslash\mbox {cl}_{\beta X}(X\backslash C)=\mbox{Ex}_X C\subseteq\mbox{int}_{\beta X}\upsilon X.\]

For the converse, if $\mbox{cl}_{\beta X }Z\subseteq\mbox{int}_{\beta X}\upsilon X$, then
\[\mbox{cl}_{\beta X }Z\subseteq\mbox{Ex}_X C_1\cup\cdots\cup \mbox{Ex}_X C_n\]
where each $C_1,\ldots,C_n\in Coz(X)$ has pseudocompact closure. If $C=C_1\cup\cdots\cup C_n$, then $Z\subseteq C$, $C\in Coz(X)$ and $\mbox{cl}_X C$ is pseudocompact.
\end{proof}

Recall that in the $z$-ultrafilter representation of $\beta X$ the points of $\upsilon X$ correspond to those $z$-ultrafilters in $X$ which have the countable intersection property.

\begin{theorem}\label{JGD}
Let $X$ be a Tychonoff space. Then
\begin{itemize}
\item[\rm(1)] $\zeta X=X$ if and only if $X$ is pseudocompact.
\item[\rm(2)] $\zeta X=\beta X$ if and only if there exists no cozero-set of $X$ with pseudocompact closure containing a non-compact zero-set of $X$.
\item[\rm(3)] The following are equivalent:
\begin{itemize}
\item[\rm(a)] $\zeta X=\alpha X$.
\item[\rm(b)] $\upsilon X\backslash X\subseteq\mbox{\em int}_{\beta X}\upsilon X$.
\item[\rm(c)] Every free $z$-ultrafilter in $X$ with the countable intersection property has an element contained in a cozero-set of $X$ with pseudocompact closure.
\end{itemize}
\end{itemize}
\end{theorem}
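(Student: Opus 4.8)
The plan is to dispose of (1), (2) and (3) in turn, in each case first rewriting the hypothesis on $\zeta X$ as a statement about $\mbox{int}_{\beta X}\upsilon X$ by means of the identity $\zeta X=X\cup(\beta X\backslash\mbox{int}_{\beta X}\upsilon X)$ of Definition \ref{TR}, and then feeding that into Lemmas \ref{JIUH} and \ref{BA27}. Part (1) is immediate: if $X$ is pseudocompact then $\upsilon X=\beta X$, so $\beta X\backslash\upsilon X=\emptyset$ and $\zeta X=X$; conversely $\zeta X=X$ gives $\mbox{cl}_{\beta X}(\beta X\backslash\upsilon X)\subseteq X\subseteq\upsilon X$, and since $\beta X\backslash\upsilon X$ lies in its own closure this forces $\beta X\backslash\upsilon X\subseteq\upsilon X$, i.e. $\upsilon X=\beta X$, i.e. $X$ pseudocompact.

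For (2), observe that $\beta X\backslash\zeta X=\mbox{int}_{\beta X}\upsilon X\backslash X$, so $\zeta X=\beta X$ if and only if $\mbox{int}_{\beta X}\upsilon X\subseteq X$. If some $C\in Coz(X)$ with $\mbox{cl}_X C$ pseudocompact contains a non-compact zero-set $Z$ of $X$, then, since $\mbox{cl}_{\beta X}Z\cap X=Z$, there is a point $t\in\mbox{cl}_{\beta X}Z\backslash X$, and Lemma \ref{BA27} gives $t\in\mbox{cl}_{\beta X}Z\subseteq\mbox{int}_{\beta X}\upsilon X$, so $\mbox{int}_{\beta X}\upsilon X\not\subseteq X$. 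Conversely, given $t\in\mbox{int}_{\beta X}\upsilon X\backslash X$, Lemma \ref{JIUH} yields $C\in Coz(X)$ with $\mbox{cl}_X C$ pseudocompact and $t\in\mbox{Ex}_X C$; by complete regularity of $\beta X$ choose a zero-set $Z'$ of $\beta X$ with $t\in\mbox{int}_{\beta X}Z'\subseteq Z'\subseteq\mbox{Ex}_X C$ and put $Z=Z'\cap X\in{\mathscr Z}(X)$. Then $Z\subseteq\mbox{Ex}_X C\cap X=C$, while density of $X$ gives $t\in\mbox{cl}_{\beta X}(\mbox{int}_{\beta X}Z'\cap X)\subseteq\mbox{cl}_{\beta X}Z$, so $\mbox{cl}_{\beta X}Z\neq Z$ and $Z$ is the desired non-compact zero-set of $X$ sitting inside $C$.

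For (3), the equivalence of (a) and (b) is a set computation: one always has $\alpha X\subseteq\zeta X$ (because $\beta X\backslash\upsilon X\subseteq\beta X\backslash\mbox{int}_{\beta X}\upsilon X$), and a short check shows $\zeta X\backslash\alpha X=(\upsilon X\backslash X)\cap(\beta X\backslash\mbox{int}_{\beta X}\upsilon X)$, which is empty exactly when $\upsilon X\backslash X\subseteq\mbox{int}_{\beta X}\upsilon X$. For (b)$\Leftrightarrow$(c) I would pass to the $z$-ultrafilter representation of $\beta X$: a point $t\in\upsilon X\backslash X$ is precisely a free $z$-ultrafilter $p$ in $X$ with the countable intersection property, and for $C\in Coz(X)$ one has $t\in\mbox{Ex}_X C$ iff $X\backslash C\notin p$ iff some $Z\in p$ satisfies $Z\subseteq C$. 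Combined with Lemma \ref{JIUH}, this says $t\in\mbox{int}_{\beta X}\upsilon X$ iff $p$ has a member contained in a cozero-set of $X$ with pseudocompact closure; ranging over all such $t$ (equivalently, all such $p$) gives (b)$\Leftrightarrow$(c).

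None of this is deep, but two points call for care: in the converse half of (2), manufacturing a genuine \emph{non-compact} zero-set of $X$ from an arbitrary point of $\mbox{int}_{\beta X}\upsilon X\backslash X$ (the density argument above is the crux); and in (b)$\Leftrightarrow$(c), getting the $z$-ultrafilter bookkeeping exactly right — in particular the standard facts that $t\in\mbox{cl}_{\beta X}W$ iff $W\in p$ for a zero-set $W$ of $X$, and that $W\notin p$ iff some member of $p$ misses $W$.
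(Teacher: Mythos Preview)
Your argument is correct and follows the same outline as the paper's proof: reduce each item to a statement about $\mbox{int}_{\beta X}\upsilon X$ via Definition~\ref{TR} and then invoke Lemmas~\ref{JIUH} and~\ref{BA27}. The only notable deviation is in (3)(b)$\Leftrightarrow$(c): where the paper, having located the limit point $t$ of $\mathscr{F}$ in some $\mbox{Ex}_X C$, uses a compactness argument on the closed sets $\mbox{cl}_{\beta X}Z$ to extract finitely many $Z_1,\ldots,Z_n\in\mathscr{F}$ whose intersection lies in $C$, you instead appeal directly to the $z$-ultrafilter algebra (namely $X\backslash C\notin p$ iff some member of $p$ misses $X\backslash C$). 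Both are routine; your version is marginally cleaner. In (2) you are also more explicit than the paper, which simply asserts the existence of a non-compact $Z\in\mathscr{Z}(X)$ with $\mbox{cl}_{\beta X}Z\subseteq\mbox{int}_{\beta X}\upsilon X$ and leaves the construction (your density argument via $\mbox{int}_{\beta X}Z'$) to the reader.
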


\begin{proof}
(1). Obviously, if $\zeta X=X$ then $X$ is pseudocompact. For the converse, if $X$ is pseudocompact, then $\zeta X=X$, as $\upsilon X=\beta X$.

(2). Note that $\zeta X\neq\beta X$ if and only if $\mbox{int}_{\beta X}\upsilon X\backslash X\neq\emptyset$ if and only if there exists a non-compact  $Z\in{\mathscr Z}(X)$ with $\mbox{cl}_{\beta X}Z\subseteq\mbox{int}_{\beta X}\upsilon X$. The result now follows from Lemma \ref{BA27}.

(3). The equivalence of (3.a) and (3.b) is obvious. (3.b) {\em implies} (3.c). Let $\mathscr{F}$ be a free $z$-ultrafilter in $X$ with the countable intersection property. Then
\[\bigcap_{Z\in\mathscr{F}}\mbox{cl}_{\beta X}Z\in\mbox{int}_{\beta X}\upsilon X\]
and thus
\[\bigcap_{Z\in\mathscr{F}}\mbox{cl}_{\beta X}Z\in\mbox{Ex}_X C\]
by Lemma \ref{JIUH}, for some $C\in Coz(X)$ with pseudocompact closure. Therefore
\[\mbox{cl}_{\beta X}Z_1\cap\cdots\cap\mbox{cl}_{\beta X}Z_n\subseteq\mbox{Ex}_X C\]
for some $Z_1,\ldots,Z_n\in\mathscr{F}$. If $Z=Z_1\cap\cdots\cap Z_n$, then $Z\in\mathscr{F}$ and $Z\subseteq C$.

(3.c) {\em implies} (3.b). Let $t\in\upsilon X\backslash X$. Then
\[t=\bigcap_{Z\in\mathscr{F}}\mbox{cl}_{\beta X}Z\]
for some free $z$-ultrafilter $\mathscr{F}$ in $X$ with the countable intersection property. Let $C$ be a cozero-set of $X$ with pseudocompact closure, containing an element $Z$ of $\mathscr{F}$. Then
\[t\in\mbox{cl}_{\beta X}Z\subseteq\mbox{Ex}_X C\subseteq\mbox{int}_{\beta X}\upsilon X\]
by the proof of Lemma \ref{BA27}.
\end{proof}

In \cite{C0}, W.W. Comfort describes a locally compact space $X$ such that $\upsilon X$ is not locally compact. For this space $X$ we necessarily have $\zeta X\neq\alpha X$; as otherwise, by the above theorem we have $\upsilon X\backslash X\subseteq\mbox{int}_{\beta X}\upsilon X$ and thus (since $X\subseteq\mbox{int}_{\beta X}\upsilon X$, as $X$ being locally compact, is open in $\beta X$) $\upsilon X\subseteq\mbox{int}_{\beta X}\upsilon X$, which is not possible.

We conclude this section with a result which characterizes spaces $X$ with locally compact Hewitt realcompactification $\upsilon X$. The proof is as of  the one given for Theorem \ref{JGD} above; the result, however, may also be deduced from W.W. Comfort's result in \cite{C}. (See \cite{Har} for an alternative characterization of such spaces $X$, and \cite{Hag}, for a characterization of spaces $X$ with locally compact $\sigma$-compact $\upsilon X$.)

\begin{proposition}\label{JGF}
For a  Tychonoff space $X$ the following are equivalent:
\begin{itemize}
\item[\rm(1)] $\upsilon X$ is locally compact.
\item[\rm(2)] Every $z$-ultrafilter in $X$ with the countable intersection property has an element contained in a cozero-set of $X$ with pseudocompact closure.
\end{itemize}
\end{proposition}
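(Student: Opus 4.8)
The plan is to follow the pattern of the proof of Theorem \ref{JGD}(3), the only genuinely new ingredient being the translation of local compactness of $\upsilon X$ into an interior condition inside $\beta X$. Since $\upsilon X$ is dense in the compact Hausdorff space $\beta X$, it is locally compact if and only if it is open in $\beta X$, that is, if and only if $\upsilon X\subseteq\mbox{int}_{\beta X}\upsilon X$ (equivalently $\upsilon X=\mbox{int}_{\beta X}\upsilon X$): a dense locally compact subspace of a Hausdorff space is always open, and conversely an open subspace of a (locally) compact Hausdorff space is locally compact. Thus it suffices to prove that (2) holds if and only if $\upsilon X\subseteq\mbox{int}_{\beta X}\upsilon X$. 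Throughout I use the $z$-ultrafilter representation of $\beta X$: a point $t\in\beta X$ is the trace $t=\bigcap_{Z\in\mathscr{F}}\mbox{cl}_{\beta X}Z$ of a $z$-ultrafilter $\mathscr{F}$ in $X$, and $t\in\upsilon X$ precisely when $\mathscr{F}$ has the countable intersection property; note that every fixed $z$-ultrafilter, in particular the one associated with a point of $X$, trivially has the countable intersection property.

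For the implication ``$\upsilon X\subseteq\mbox{int}_{\beta X}\upsilon X$ implies (2)'', let $\mathscr{F}$ be a $z$-ultrafilter in $X$ with the countable intersection property and let $t=\bigcap_{Z\in\mathscr{F}}\mbox{cl}_{\beta X}Z$ be the corresponding point, so that $t\in\upsilon X\subseteq\mbox{int}_{\beta X}\upsilon X$. By Lemma \ref{JIUH} there is a $C\in Coz(X)$ with pseudocompact closure such that $t\in\mbox{Ex}_XC$. Now $\{\mbox{cl}_{\beta X}Z:Z\in\mathscr{F}\}$ together with the closed set $\beta X\backslash\mbox{Ex}_XC$ is a family of closed subsets of the compact space $\beta X$ whose total intersection is empty, so some finite subfamily has empty intersection; since the $\mbox{cl}_{\beta X}Z$ form a filter base, this yields $Z_1,\ldots,Z_n\in\mathscr{F}$ with $\mbox{cl}_{\beta X}Z_1\cap\cdots\cap\mbox{cl}_{\beta X}Z_n\subseteq\mbox{Ex}_XC$. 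Put $Z=Z_1\cap\cdots\cap Z_n\in\mathscr{F}$. Then $\mbox{cl}_{\beta X}Z$ misses $\mbox{cl}_{\beta X}(X\backslash C)$, hence $Z$ misses $X\backslash C$, that is, $Z\subseteq C$. This proves (2).

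For the converse, assume (2) and let $t\in\upsilon X$. Write $t=\bigcap_{Z\in\mathscr{F}}\mbox{cl}_{\beta X}Z$ for the associated $z$-ultrafilter $\mathscr{F}$, which has the countable intersection property. By (2) there is a $Z\in\mathscr{F}$ contained in some $C\in Coz(X)$ with pseudocompact closure; as in the proof of Lemma \ref{BA27} this gives $\mbox{cl}_{\beta X}Z\subseteq\mbox{Ex}_XC\subseteq\mbox{int}_{\beta X}\upsilon X$, and since $t\in\mbox{cl}_{\beta X}Z$ we conclude $t\in\mbox{int}_{\beta X}\upsilon X$. Hence $\upsilon X\subseteq\mbox{int}_{\beta X}\upsilon X$, as required.

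The argument is essentially routine once the preparatory lemmas are available; the only points requiring a little care are the opening observation that local compactness of $\upsilon X$ coincides with its openness in $\beta X$, and the remark that, because condition (2) is imposed on \emph{all} $z$-ultrafilters with the countable intersection property rather than only the free ones, it also controls the points of $X$ itself (in effect forcing local pseudocompactness of $X$) --- which is precisely what is needed to place all of $\upsilon X$, and not merely $\upsilon X\backslash X$, inside $\mbox{int}_{\beta X}\upsilon X$.
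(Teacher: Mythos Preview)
Your proof is correct and follows essentially the same approach the paper intends: it mirrors the argument of Theorem~\ref{JGD}(3), replacing the condition $\upsilon X\backslash X\subseteq\mbox{int}_{\beta X}\upsilon X$ by $\upsilon X\subseteq\mbox{int}_{\beta X}\upsilon X$ and free $z$-ultrafilters by all $z$-ultrafilters with the countable intersection property. The only extra step you supply---the standard observation that local compactness of the dense subspace $\upsilon X$ is equivalent to its openness in $\beta X$---is exactly the translation needed and is unproblematic.
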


\section {The local pseudocompactness of $X$}

A Tychonoff space $X$ is called {\em locally pseudocompact} if every point of $X$ has an open neighborhood with pseudocompact closure. Local pseudocompactness will be crucial here; we first focus on that.

\begin{definition}\label{UTY}
For a Tychonoff space $X$ let
\[R(X)=\mbox{\em cl}_{\beta X}(\beta X\backslash\upsilon X)=\beta X\backslash\mbox{\em int}_{\beta X}\upsilon X.\]
Therefore
\[\zeta X=X\cup R(X).\]
\end{definition}

\begin{theorem}\label{FGA}
For a Tychonoff space $X$ the following are equivalent:
\begin{itemize}
\item[\rm(1)] $X$ is locally pseudocompact.
\item[\rm(2)] {\em (Comfort \cite{C})} $X\subseteq\mbox{\em int}_{\beta X}\upsilon X$.
\item[\rm(3)] $X$ and $R(X)$ are disjoint.
\item[\rm(4)] $\zeta X\backslash X$ is compact.
\item[\rm(5)] $X$ has a pseudocompactification with compact remainder.
\end{itemize}
\end{theorem}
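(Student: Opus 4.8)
The strategy is to prove the implications in a cycle, $(1)\Rightarrow(2)\Rightarrow(3)\Rightarrow(4)\Rightarrow(5)\Rightarrow(1)$, leaning heavily on Lemma \ref{JIUH} for the equivalence of the first few conditions. For $(1)\Rightarrow(2)$: if $x\in X$ has an open neighborhood $U$ in $X$ with $\mbox{cl}_X U$ pseudocompact, then passing to a cozero-set neighborhood $C$ of $x$ with $C\subseteq U$ (possible since $X$ is Tychonoff) gives $C\in Coz(X)$ with $\mbox{cl}_X C$ pseudocompact (a closed subset of a pseudocompact regular closed set is, after regular-closure adjustments, handled by Lemma \ref{A}/\ref{HGA} — more simply, $\mbox{cl}_X C$ is regular closed and contained in the pseudocompact $\mbox{cl}_X U$, hence pseudocompact). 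Then $x\in\mbox{Ex}_X C\subseteq\mbox{int}_{\beta X}\upsilon X$ by Lemma \ref{JIUH}. For $(2)\Rightarrow(1)$ (not needed for the cycle but worth noting): reverse this using the description in Lemma \ref{JIUH}, since $x\in\mbox{int}_{\beta X}\upsilon X$ puts $x$ in some $\mbox{Ex}_X C$ with $\mbox{cl}_X C$ pseudocompact, and $\mbox{Ex}_X C\cap X=C$ is then the desired neighborhood.

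The equivalences $(2)\Leftrightarrow(3)\Leftrightarrow(4)$ are essentially set-theoretic bookkeeping with $R(X)=\beta X\backslash\mbox{int}_{\beta X}\upsilon X$ from Definition \ref{UTY}. Indeed $X$ and $R(X)$ are disjoint exactly when $X\subseteq\mbox{int}_{\beta X}\upsilon X$, giving $(2)\Leftrightarrow(3)$ immediately. For $(3)\Leftrightarrow(4)$: $\zeta X\backslash X=R(X)\backslash X$, and $R(X)$ is closed in $\beta X$, hence compact; so if $X\cap R(X)=\emptyset$ then $\zeta X\backslash X=R(X)$ is compact. Conversely if $\zeta X\backslash X$ is compact, it is closed in $\zeta X$, so $X$ is open in $\zeta X$; since $X$ is dense in $\zeta X$ and $R(X)$ is contained in the closure of $\beta X\backslash\upsilon X$, one argues that any point of $X\cap R(X)$ would be a point of $X$ in the closure of $\beta X\backslash\upsilon X$, contradicting that $X$ is open in $\zeta X$ while $\beta X\backslash\upsilon X\subseteq\zeta X\backslash X$. (This last direction is the one to write carefully.)

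The implication $(4)\Rightarrow(5)$ is trivial: $\zeta X$ is a pseudocompactification of $X$ (as remarked after Definition \ref{TR}, it densely contains $\alpha X$), so under $(4)$ it is itself the required pseudocompactification with compact remainder. The interesting and final step is $(5)\Rightarrow(1)$. Suppose $Y$ is a pseudocompactification of $X$ with $Y\backslash X$ compact. Then $Y\backslash X$ is closed in $Y$, so $X$ is open in $Y$; fix $x\in X$ and choose an open neighborhood $V$ of $x$ in $Y$ with $\mbox{cl}_Y V\subseteq X$. Since $\mbox{cl}_Y V$ is closed in the pseudocompact space $Y$, it is pseudocompact; and $\mbox{cl}_Y V=\mbox{cl}_X(V\cap X)$ with $V\cap X$ open in $X$, so $x$ has a neighborhood in $X$ with pseudocompact closure, i.e.\ $X$ is locally pseudocompact.

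I expect the main obstacle to be the direction $(4)\Rightarrow(3)$ (or whichever way one routes the cycle through these two), because it requires showing that compactness of the remainder $\zeta X\backslash X$ forces $X$ to miss $R(X)$ entirely — one must rule out a point of $X$ lying in $\mbox{cl}_{\beta X}(\beta X\backslash\upsilon X)$, which is precisely a failure of local pseudocompactness and needs the fact that $X$ being locally compact in $\zeta X$ (from compact remainder) propagates back to separate $X$ from the closure of $\beta X\backslash\upsilon X$ inside $\beta X$. Everything else is a routine application of Lemma \ref{JIUH} and elementary facts about closed subsets of pseudocompact spaces.
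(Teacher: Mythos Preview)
Your approach matches the paper's: the same cycle $(1)\Rightarrow(2)\Rightarrow(3)\Rightarrow(4)\Rightarrow(5)\Rightarrow(1)$, with the paper simply citing Comfort for $(1)\Leftrightarrow(2)$ where you reprove it via Lemma~\ref{JIUH}, and the paper invoking exactly the hereditary property of pseudocompactness with respect to regular closed subsets for $(5)\Rightarrow(1)$.

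Two small corrections. First, in your $(5)\Rightarrow(1)$ argument you write ``Since $\mbox{cl}_Y V$ is closed in the pseudocompact space $Y$, it is pseudocompact''; this is false as stated---closed subspaces of pseudocompact spaces need not be pseudocompact. What you need (and what the paper invokes) is that $\mbox{cl}_Y V$ is \emph{regular closed} in $Y$ (it is, since $V$ is open), and pseudocompactness \emph{is} inherited by regular closed subspaces. You handled this correctly in your $(1)\Rightarrow(2)$ discussion, so this is just a slip. Second, your identified ``main obstacle'' $(4)\Rightarrow(3)$ is not needed at all for the cycle you have already set up, and the paper does not prove it directly either---it is recovered via $(4)\Rightarrow(5)\Rightarrow(1)\Rightarrow(2)\Rightarrow(3)$. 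So the proof is shorter than you anticipate: once you fix the phrasing in $(5)\Rightarrow(1)$, your cycle is complete and no separate argument for $(4)\Rightarrow(3)$ is required.
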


\begin{proof}
The equivalence of (1) and (2) is due to W.W. Comfort \cite{C}. Obviously, (2) and (3) are equivalent, (3) implies (4), and (4) implies (5). That (5) implies (1) follows from the fact that pseudocompactness is hereditary with respect to regular closed subsets.
\end{proof}

\section {The general form of pseudocompactifications of $X$ with compact remainder}

Our next purpose in this note is to characterize $\zeta X$ among all pseudocompactifications of $X$ with compact remainder. But before we proceed with this, let us find the general form of all such pseudocompactifications. This will be done in this section.

The following lemma is well known; we include the proof here for the sake of completeness.

\begin{lemma}\label{j2}
Let $X$ be a Tychonoff space, let $Y$ be a Tychonoff extension of $X$ with compact remainder and let $\phi:\beta X\rightarrow\beta Y$ continuously extend $\mbox{\em id}_X$. Then $\beta Y$ coincides with the quotient space of $\beta X$ obtained by contracting each fiber $\phi^{-1}(p)$, for $p\in Y\backslash X$, to $p$, and $\phi$ is the quotient mapping.
\end{lemma}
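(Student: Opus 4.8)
The plan is to show that the continuous extension $\phi\colon\beta X\to\beta Y$ is a quotient map realizing $\beta Y$ as the described decomposition space, by exploiting compactness of $Y\setminus X$ and general facts about the Stone--\v{C}ech compactification. First I would note that $\phi$ exists and is unique because $\beta X$ is compact and $\beta Y$ is a compactification of $X$; since $Y\setminus X$ is compact and $X$ is dense in $Y$, the space $Y$ is itself a compactification of $X$, so in fact $\beta Y=\beta(Y)$ and $\phi$ maps $\beta X$ onto $\beta Y$. Being a continuous surjection between compact Hausdorff spaces, $\phi$ is a closed map, hence a quotient map. It therefore suffices to identify the fibers of $\phi$: I must show that $\phi^{-1}(p)$ is a singleton for $p\in X$ and also for $p\in\beta Y\setminus Y$, while $\phi^{-1}(p)$ for $p\in Y\setminus X$ is exactly the fiber to be contracted, and that distinct points of $\beta Y$ have disjoint fibers (automatic) whose union is all of $\beta X$.

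The key steps, in order: (i) $\phi|_X=\mathrm{id}_X$ and, since $X$ is $C^\ast$-embedded in both $\beta X$ and $\beta Y$ and $\phi$ restricts to the identity on the dense subspace $X$, standard Stone--\v{C}ech theory gives that $\phi$ is injective on $\beta X\setminus\phi^{-1}(Y\setminus X)$; more precisely, two points of $\beta X$ with the same image under $\phi$ that lies outside $Y\setminus X$ must be separated by a function in $C^\ast(X)$, forcing them to coincide. (ii) For $p\in Y\setminus X$, the fiber $\phi^{-1}(p)$ is a nonempty closed subset of $\beta X$ disjoint from $X$ (if $x\in X$ then $\phi(x)=x\neq p$), so it lies in $\beta X\setminus X$; collapsing each such fiber to the point $p$ is exactly the decomposition in the statement. (iii) Conversely, every point of $\beta Y$ is hit: points of $Y\setminus X$ by compactness and density arguments, points of $X$ trivially, and points of $\beta Y\setminus Y$ because $\phi(\beta X)$ is a compact subset of $\beta Y$ containing the dense set $X$, hence equals $\beta Y$. (iv) Assemble: the decomposition of $\beta X$ into the fibers $\phi^{-1}(p)$, $p\in Y\setminus X$, together with singletons elsewhere, is upper semicontinuous (since $\phi$ is closed), its quotient is Hausdorff, and the induced continuous bijection from that quotient onto $\beta Y$ is a homeomorphism by compactness; under this identification $\phi$ is precisely the quotient map.

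I expect the main obstacle to be step (i) together with the verification that $\phi$ is injective over $\beta Y\setminus Y$ as well, i.e. that the only nontrivial identifications $\phi$ makes occur over $Y\setminus X$. This requires knowing that $Y$, as a compactification of $X$, has $\beta Y$ obtained from $\beta X$ by a decomposition supported on the remainder --- intuitively clear but needing the careful $C^\ast$-embedding argument: if $\phi(t_1)=\phi(t_2)=q$ with $t_1\neq t_2$, pick $f\in C^\ast(X)$ with $f^{\beta X}(t_1)\neq f^{\beta X}(t_2)$; then $f$ does not extend continuously over $q$ in $\beta Y$, which (when $q\in Y$, using that $Y$ is Tychonoff and $q$ has arbitrarily small neighborhoods meeting $X$ in sets whose $f$-images converge) can only happen if $q\in Y\setminus X$, since at points of $X$ the extension is just $f$ itself. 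Once this dichotomy is in hand, the remaining assembly is the routine "continuous bijection from a compact space onto a Hausdorff space is a homeomorphism" packaging, and the description of $\beta Y$ as the stated quotient follows immediately.
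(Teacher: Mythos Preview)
Your overall strategy---observe that $\phi$ is a closed surjection between compact Hausdorff spaces, hence already a quotient map, and then identify its fibers---is sound and more economical than the paper's route, which builds the quotient $T$ by hand, checks Hausdorffness by cases, verifies that $Y$ embeds in $T$, and finally proves $T=\beta Y$ by extending each $f\in C^*(Y)$ over $T$ (using $\beta S=\beta X$ for $S=X\cup\phi^{-1}[Y\setminus X]$). There is, however, a small misstep and a genuine gap. The misstep: $Y$ is \emph{not} in general a compactification of $X$ (take $X=(0,1)$, $Y=(0,1]$); what is true, and what you actually need, is that $\beta Y$ is a compactification of $X$, whence $\phi$ is onto---and you do give that correct argument later in (iii).

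The real gap is exactly the point you flag but do not resolve: injectivity of $\phi$ over $\beta Y\setminus Y$. Your sketch only handles $q\in Y$; when $q\in\beta Y\setminus Y$, the observation that a chosen $f\in C^*(X)$ fails to extend to $q$ yields no contradiction at all, since such an $f$ is under no obligation to extend beyond $X$. The missing idea is to \emph{exploit the compactness of $Y\setminus X$} to manufacture a separating function that \emph{does} extend to $Y$ (hence to $\beta Y$): given distinct $t_1,t_2\in\phi^{-1}(q)$ with $q\notin Y\setminus X$, choose $g\in C(\beta Y,[0,1])$ with $g(q)=1$ and $g|_{Y\setminus X}\equiv 0$, and $h\in C(\beta X,[0,1])$ separating $t_1,t_2$; then $h'=h\cdot(g\circ\phi)$ still separates $t_1$ from $t_2$, but now $h'|_X$ extends continuously to all of $Y$ (with value $0$ on $Y\setminus X$, since every $\beta X$-cluster point of a net in $X$ converging to some $p\in Y\setminus X$ lies in $\phi^{-1}(p)$, where $g\circ\phi$ vanishes). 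The extension $H\in C^*(Y)$ then satisfies $H^{\beta Y}\circ\phi=h'$ on $\beta X$, forcing $h'(t_1)=h'(t_2)$, a contradiction. Without this step your argument is incomplete; the paper avoids the issue entirely by never analyzing the fibers of $\phi$ directly.
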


\begin{proof}
Let $Y\backslash X=\{p_i:i\in I\}$ where $p_i$'s are bijectively indexed. Let $T$ be the space obtained from $\beta X$ by contracting each fiber $\phi^{-1}(p_i)$ where $i\in I$ to a point $a_i$. We show that $T=\beta Y$ (identifying each $a_i$ with $p_i$). First, we show that $T$ is a compactification of $Y$. To show that $T$ is Hausdorff let $s,t\in T$ be distinct. Consider the following cases:
\begin{description}
\item[{\sc Case 1}] Suppose that $s,t\in T\backslash \{a_i:i\in I\}$. Then $s,t\in \beta X\backslash\phi^{-1}[Y\backslash X]$ and thus there exist disjoint open neighborhoods $U$ and $V$ of $s$ and $t$ in $\beta X$, respectively, each disjoint from $\phi^{-1}[Y\backslash X]$. The sets $q[U]$ and $q[V]$ are disjoint open neighborhoods of $s$ and $t$ in $T$, respectively.
\item[{\sc Case 2}] Suppose that $s=a_i$ for some $i\in I$ and $t\in T\backslash \{a_i:i\in I\}$. Then $\phi^{-1}[Y\backslash X]$ is a compact subset of $\beta X$ not containing $t$ and thus there exist disjoint open subsets $U$ and $V$ of $\beta X$ such that $\phi^{-1}[Y\backslash X]\subseteq U$ and $t\in V$. Now $q[U]$ and $q[V]$ are disjoint open neighborhoods of $s$ and $t$ in $T$, respectively. The case when  $s\in T\backslash\{a_i:i\in I\}$ and $t=a_j$ for some  $j\in I$ is analogous.
\item[{\sc Case 3}] Suppose that $s=a_i$ and $t=a_j$ for some $i,j\in I$. Let $U_i $ and $U_j$ be disjoint open neighborhoods of $p_i$ and $p_j$  in $\beta Y$, respectively. Since $q^{-1}[q[\phi^{-1}[U_k]]]=\phi^{-1}[U_k]$, where $k=i,j$, are open subsets of $\beta X$ and  $\phi^{-1}(p_k)\subseteq\phi^{-1}[U_k]$ the sets $q[\phi^{-1}[U_k]]$, where $k=i,j$, are disjoint open neighborhoods of $s$ and $t$  in $T$, respectively.
\end{description}
This shows that $T$ is Hausdorff and therefore, being a continuous image of $\beta X$, it is compact. Note that $Y$ is a subspace of $T$. To show this first note that since $\beta Y$ is also a compactification of $X$ we have $\phi[\beta X\backslash X]=\beta Y\backslash X$.  Now if $W$ is open in $\beta Y$, since $q^{-1}[q[\phi^{-1}[W]]]=\phi^{-1}[W]$ is open in $\beta X$ the set $q[\phi^{-1}[W]]$ is open in $T$, and therefore
\[W\cap Y=q\big[\phi^{-1}[W]\big]\cap Y\]
is open in $Y$ as a subspace of $T$. For the converse, note that if $W$ is an open subset of $T$, then
\[W\cap Y=\big(\beta Y\backslash\phi\big[\beta X\backslash q^{-1}[W]\big]\big)\cap Y\]
and therefore (since $\phi[\beta X\backslash q^{-1}[W]]$ is compact) the set $W\cap Y$ is open in $Y$ in its original topology. Clearly, $Y$ is dense in $T$ and therefore $T$ is a compactification of $Y$. To  show that $T=\beta Y$ it suffices to verify that any continuous $f:Y\rightarrow[0,1]$ can be continuously extended over $T$. Indeed, consider the continuous mapping
\[g=fq:S=X\cup\phi^{-1}[Y\backslash X]\rightarrow[0,1].\]
Note that $\beta S=\beta X$. Let $g_\beta:\beta X\rightarrow[0,1]$ be the continuous extension of $g$. Define $F:T\rightarrow[0,1]$ such that $F(x)=g_\beta(x)$ for any $x\in\beta X\backslash\phi^{-1}[Y\backslash X]$ and $F(p_i)=f(p_i)$ for any $i\in I$. Then $F|Y=f$ and since $Fq=g_\beta$ is continuous, $F$ is continuous. This shows that $T=\beta Y$. Note, this also implies that $\phi=q$, as they both coincide on $X$.
\end{proof}

\begin{lemma}\label{ASD}
Let $X$ be a Tychonoff space, let $Y$ be a Tychonoff extension of $X$ with compact remainder, let $K$ be a compactification of $Y$ and let $\phi:\beta X\rightarrow K$ continuously extend $\mbox{\em id}_X$. Then the following are equivalent:
\begin{itemize}
\item[\rm(1)] $Y$ is a pseudocompactification of $X$.
\item[\rm(2)] $\mbox{\em cl}_{\beta X}(\beta X\backslash\upsilon X)\subseteq\phi^{-1}[Y\backslash X]$.
\end{itemize}
\end{lemma}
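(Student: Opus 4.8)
The plan is to push everything through the Stone extension $\psi:\beta X\to\beta Y$ of the inclusion $X\hookrightarrow Y$ and reduce condition (2) to a statement that no longer involves $K$. Writing $\pi:\beta Y\to K$ for the continuous extension of $\mbox{id}_Y$, we have $\phi=\pi\psi$, since both are continuous maps of $\beta X$ into the Hausdorff space $K$ agreeing on the dense set $X$. Because the natural map of any compactification is one-to-one over the underlying space (as noted in the proof of Lemma \ref{j2}), $\pi^{-1}(y)=\{y\}$ for $y\in Y$ and $\psi^{-1}(x)=\{x\}$ for $x\in X$; hence $\phi^{-1}[Y\backslash X]=\psi^{-1}[Y\backslash X]$, and this set, being the $\psi$-preimage of the compact set $Y\backslash X$, is closed in $\beta X$. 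Therefore (2) is equivalent to $\beta X\backslash\upsilon X\subseteq\psi^{-1}[Y\backslash X]$, that is, to $S\subseteq\upsilon X$, where $S=\beta X\backslash\psi^{-1}[Y\backslash X]$ is open in $\beta X$, contains $X$, and satisfies $S\backslash X=\psi^{-1}[\beta Y\backslash Y]$.

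Next I would feed in the geometric content of Lemma \ref{j2}: $\psi$ is the quotient map of $\beta X$ obtained by collapsing each fiber over a point of $Y\backslash X$, so every fiber of $\psi$ over a point of $\beta Y\backslash(Y\backslash X)$ is a singleton. Thus $S$ is a saturated open subset of $\beta X$ on which $\psi$ is one-to-one, so $\psi|S$ is a homeomorphism of $S$ onto $L:=\psi[S]=\beta Y\backslash(Y\backslash X)$ that restricts to the identity on $X=Y\cap L$, and $\psi$ carries $S\backslash X$ bijectively onto $\beta Y\backslash Y$.

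The heart of the matter, which I expect to be the main obstacle, is the transfer claim: \emph{for $p\in S$ with $\psi(p)\in\beta Y\backslash Y$, one has $p\in\upsilon X$ if and only if $\psi(p)\in\upsilon Y$}. Here I would use the standard realization $\upsilon T=\{t\in\beta T:\text{every zero-set of }\beta T\text{ containing }t\text{ meets }T\}$. The implication ``$p\in\upsilon X\Rightarrow\psi(p)\in\upsilon Y$'' is easy: a zero-set of $\beta Y$ through $\psi(p)$ pulls back along $\psi$ to a zero-set of $\beta X$ through $p$, which meets $X$, and whose image then meets $X\subseteq Y$. For the converse, given a zero-set $W$ of $\beta X$ with $p\in W$, I would use normality of $\beta X$ and the fact that $p$ lies off the compact set $\psi^{-1}[Y\backslash X]$ to shrink $W$ so that $W\subseteq S$. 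Then $\psi[W]$ is a zero-set of $L$ (the image under the homeomorphism $\psi|S$ of the zero-set $W$ of $S$), hence $\psi[W]=Z\cap L$ for some zero-set $Z$ of $\beta Y$ (since $Y\subseteq L\subseteq\beta Y$ forces $\beta L=\beta Y$). Now $\psi(p)\in Z$ and $\psi(p)\in\upsilon Y$ give $Z\cap Y\neq\emptyset$, while $Z\cap Y=(Z\cap L)\cap Y=\psi[W]\cap Y=\psi[W]\cap X$; choosing $y$ in this set and using $\psi^{-1}(y)=\{y\}$ exhibits $y\in W\cap X$, so $p\in\upsilon X$.

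Finally I would assemble the pieces: $S\subseteq\upsilon X$ if and only if $S\backslash X\subseteq\upsilon X$ (since $X\subseteq\upsilon X$ always), if and only if every point of $\psi^{-1}[\beta Y\backslash Y]$ lies in $\upsilon X$, if and only if, by the transfer claim, every point of $\beta Y\backslash Y$ lies in $\upsilon Y$, if and only if $\upsilon Y=\beta Y$, if and only if $Y$ is pseudocompact. Since $Y$ is already given as an extension of $X$, the last condition says exactly that $Y$ is a pseudocompactification of $X$; combined with the reduction of the first paragraph, this establishes the equivalence of (1) and (2). Compactness of $Y\backslash X$ is used both to invoke Lemma \ref{j2} and to guarantee that $\psi^{-1}[Y\backslash X]$ is closed, which is what makes $S$ open and allows $W$ to be shrunk into it.
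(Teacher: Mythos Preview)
Your reduction to $K=\beta Y$ matches the paper's, and your overall strategy---using the homeomorphism $\psi|S:S\to L$ to transfer membership in $\upsilon X$ to membership in $\upsilon Y$---is sound and handles both directions at once. But the parenthetical justification in the hard half of your transfer claim is wrong: you assert ``$Y\subseteq L\subseteq\beta Y$ forces $\beta L=\beta Y$'', yet $L=\beta Y\backslash(Y\backslash X)$ has $L\cap Y=X$, so $Y\subseteq L$ fails whenever $Y\neq X$. In fact $\beta L\cong\beta S=\beta X$ via $\psi|S$, which is in general \emph{not} $\beta Y$, and an arbitrary zero-set of the open subspace $L$ need not be the trace of a zero-set of $\beta Y$.

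The specific set $\psi[W]$ you need \emph{is} nevertheless a zero-set of $\beta Y$, and the repair uses Lemma~\ref{j2} more directly. Write $W=f^{-1}(0)$ with $f:\beta X\to[0,1]$ continuous, pick a Urysohn function $h:\beta X\to[0,1]$ with $h|W\equiv 0$ and $h|\psi^{-1}[Y\backslash X]\equiv 1$, and set $g=\max(f,h)$; then $g^{-1}(0)=W$ and $g$ is constant on every fiber of the quotient map $\psi$, so $g=\bar g\,\psi$ for a continuous $\bar g:\beta Y\to[0,1]$, and $Z:=\bar g^{-1}(0)=\psi[W]$ is itself a zero-set of $\beta Y$ containing $\psi(p)$. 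From $\psi(p)\in\upsilon Y$ you get $Z\cap Y\neq\emptyset$, and since $Z\subseteq L$ forces $Z\cap Y\subseteq X$, some $y\in X$ lies in $\psi[W]$, whence $y\in W\cap X$.

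With this fix your argument is complete and is a genuinely different route from the paper's. The paper treats the two implications separately by contradiction: for $(1)\Rightarrow(2)$ it turns a point of $(\beta X\backslash\upsilon X)\backslash\phi^{-1}[Y\backslash X]$ into a nonempty $G_\delta$ of $\beta Y$ missing $Y$; for $(2)\Rightarrow(1)$ it pulls a zero-set of $\beta Y$ missing $Y$ back to a zero-set of $\beta X$ missing $X$, hence lying in $\phi^{-1}[Y\backslash X]$. That approach is shorter; yours isolates a reusable pointwise correspondence between $(\upsilon X\backslash X)\cap S$ and $\upsilon Y\backslash Y$ under $\psi|S$.
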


\begin{proof}
We prove the lemma first in the case when $K=\beta Y$. Note that since $\phi^{-1}[Y\backslash X]$ is closed in $\beta X$, condition (2) is equivalent to the requirement that $\beta X\backslash\upsilon X\subseteq\phi^{-1}[Y\backslash X]$.

(1) {\em implies} (2). Let $x\in\beta X\backslash\upsilon X$  and suppose to the contrary that $x\notin\phi^{-1}[Y\backslash X]$. Let $P\in{\mathscr Z}(\beta X)$ be such that $x\in P$ and $P\cap X=\emptyset$. Now $G=P\backslash\phi^{-1}[Y\backslash X]$ is non-empty (as it contains $x$) and it is a countable intersection of open subsets of $\beta X$ each missing $\phi^{-1}[Y\backslash X]$. Thus (using Lemma \ref{j2}) $G$ is a non-empty $G_\delta$-set of $\beta Y$ which misses $Y$, contradicting the pseudocompactness of $Y$.

(2) {\em implies} (1). Suppose to the contrary that $Y$ is not pseudocompact. Let $p\in \beta Y\backslash\upsilon Y$ and let $Z\in {\mathscr Z}(\beta Y)$ be such that $p\in Z$ and $Z\cap Y=\emptyset$. Then $\phi^{-1}[Z]\in{\mathscr Z}(\beta X)$ misses $X$, and thus
\[\phi^{-1}[Z]\subseteq \beta X\backslash\upsilon X\subseteq\phi^{-1}[Y\backslash X].\]
Since $p\in\phi^{-1}[Z]$ (as $\phi(p)=p$; see Lemma \ref{j2}) we have $p\in\phi^{-1}[Y\backslash X]$ or $p=\phi(p)\in Y\backslash X$, which contradicts the choice of $Z$.

Now suppose that $K$ is an arbitrary compactification of $Y$. Denote by $\psi:\beta X\rightarrow K$ and $\gamma:\beta Y\rightarrow K$ the continuous extensions of $\mbox{id}_X$ and $\mbox{id}_Y$, respectively. Note that $\gamma\psi=\phi$, as they agree on $X$, and $\gamma[\beta Y\backslash Y]=K\backslash Y$. The lemma now follows, as
\[\psi^{-1}[Y\backslash X]=\psi^{-1}\big[\gamma^{-1}[Y\backslash X]\big]=(\gamma\psi)^{-1}[Y\backslash X]=\phi^{-1}[Y\backslash X].\]
\end{proof}

\begin{theorem}\label{IGD}
Let $X$ be a locally pseudocompact space, let $K$ be a compactification of $X$ and let $\phi:\beta X\rightarrow K$ continuously extend $\mbox{\em id}_X$. Then $\phi[\zeta X]$ is the smallest (with respect to $\subseteq$) pseudocompactification of $X$ with compact remainder contained in $K$.
\end{theorem}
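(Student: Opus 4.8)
The plan is to establish three things: that $\phi[\zeta X]$ is a pseudocompactification of $X$ with compact remainder, that it is contained in $K$, and that it is contained in every pseudocompactification of $X$ with compact remainder that is contained in $K$. The second is trivial, since $\zeta X\subseteq\beta X$ gives $\phi[\zeta X]\subseteq\phi[\beta X]=K$. The preliminary fact I would record is that $\phi^{-1}[X]=X$, equivalently $\phi[\beta X\setminus X]=K\setminus X$; this holds for every compactification $K$ of $X$ (it is exactly the fact invoked inside the proof of Lemma \ref{j2} for $\beta Y$): if $\phi(p)\in X$, choose a net in $X$ converging to $p$ in $\beta X$; applying $\phi$ (which fixes $X$ pointwise) this net converges to $\phi(p)$ in $K$, hence, since $\phi(p)\in X$ and $X$ carries the subspace topology from $K$, it converges to $\phi(p)$ in $X$, hence in $\beta X$, so $p=\phi(p)\in X$. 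Also, since $X$ is locally pseudocompact, Theorem \ref{FGA} gives that $R(X)$ is compact and disjoint from $X$, so that $\zeta X=X\cup R(X)$ is a pseudocompactification of $X$ (it is pseudocompact, as noted after Definition \ref{TR}) with compact remainder $R(X)$.

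Next I would verify that $Y_0:=\phi[\zeta X]$ is a pseudocompactification of $X$ with compact remainder. Since $\phi$ fixes $X$, $Y_0=X\cup\phi[R(X)]$; and since $R(X)\subseteq\beta X\setminus X$, the preliminary fact gives $\phi[R(X)]\subseteq K\setminus X$, so $Y_0\setminus X=\phi[R(X)]$, a continuous image of the compact set $R(X)$ and hence compact. As a subspace of the compact Hausdorff space $K$ which contains $X$ densely and with its original topology, $Y_0$ is therefore a Tychonoff extension of $X$ with compact remainder, and $K$ is a compactification of $Y_0$. Pseudocompactness of $Y_0$ then follows from Lemma \ref{ASD}, applied to $X\subseteq Y_0\subseteq K$ with the map $\phi$: its condition (2) reads $\mathrm{cl}_{\beta X}(\beta X\setminus\upsilon X)\subseteq\phi^{-1}[Y_0\setminus X]=\phi^{-1}\big[\phi[R(X)]\big]$, which holds trivially.

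Finally I would prove minimality. Let $Y$ be any pseudocompactification of $X$ with compact remainder such that $Y\subseteq K$. Then $X\subseteq Y\subseteq K$, so $K$ is a compactification of $Y$, and Lemma \ref{ASD} in the direction $(1)\Rightarrow(2)$ yields $R(X)=\mathrm{cl}_{\beta X}(\beta X\setminus\upsilon X)\subseteq\phi^{-1}[Y\setminus X]$, whence $\phi[R(X)]\subseteq Y$. Since $X\subseteq Y$ as well, we conclude $\phi[\zeta X]=X\cup\phi[R(X)]\subseteq Y$. Together with the previous paragraph, this shows that $\phi[\zeta X]$ is the smallest pseudocompactification of $X$ with compact remainder contained in $K$.

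The only delicate point — the main obstacle — is correctly identifying the remainder of $\phi[\zeta X]$: one must know that $\phi$ carries no point of $R(X)$ back into $X$, so that $\phi[\zeta X]\setminus X$ is precisely the compact set $\phi[R(X)]$. This is where the equality $\phi^{-1}[X]=X$ and the local pseudocompactness of $X$ (through $X\cap R(X)=\emptyset$, Theorem \ref{FGA}) enter. Once the remainder has been pinned down, both the pseudocompactness of $\phi[\zeta X]$ and its minimality are routine applications of Lemma \ref{ASD} in its two directions, with $K$ playing the role of the ambient compactification throughout.
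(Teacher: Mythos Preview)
Your proof is correct and follows essentially the same route as the paper: identify the remainder of $\phi[\zeta X]$ via the standard fact $\phi[\beta X\setminus X]=K\setminus X$, then apply Lemma~\ref{ASD} in the direction $(2)\Rightarrow(1)$ for pseudocompactness and $(1)\Rightarrow(2)$ for minimality, with Theorem~\ref{FGA} supplying $X\cap R(X)=\emptyset$. The only difference is cosmetic: you spell out a net argument for $\phi^{-1}[X]=X$ and explicitly check that $K$ is a compactification of $Y_0$, whereas the paper simply asserts $\phi[\zeta X]\setminus X=\phi[\zeta X\setminus X]$ and proceeds.
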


\begin{proof}
Note that $\phi[\zeta X]\backslash X=\phi[\zeta X\backslash X]$, as $\phi[\beta X\backslash X]=K\backslash X$. Thus $\phi[\zeta X]$ is an extension of $X$ with compact remainder. That $\phi[\zeta X]$ is pseudocompact follows from Lemma \ref{ASD} (and Theorem \ref{FGA}), as
\[\zeta X\backslash X\subseteq\phi^{-1}\big[\phi[\zeta X\backslash X]\big]=\phi^{-1}\big[\phi[\zeta X]\backslash X\big].\]
If $Y\subseteq K$ is a pseudocompactification of $X$ with compact remainder, then
\[\phi[\zeta X\backslash X]\subseteq\phi\big[\phi^{-1}[Y\backslash X]\big]\subseteq Y\backslash X\]
by Lemma \ref{ASD}, and therefore $\phi[\zeta X]\subseteq Y$.
\end{proof}

\begin{theorem}\label{RES}
Let $X$ be a locally pseudocompact space. Let $K$ be a compactification of $X$, let $\phi:\beta X\rightarrow K$ continuously extend $\mbox{\em id}_X$ and let $E$ be a compact subset of $K\backslash X$ containing $\phi[\zeta X\backslash X]$. Then the subspace $Y=X\cup E$ of $K$ is a pseudocompactification of $X$ with compact remainder. Furthermore, every pseudocompactification of $X$ with compact remainder is of this form.
\end{theorem}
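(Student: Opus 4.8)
The plan is to split the statement into two halves: first, that any $Y=X\cup E$ of the described form is a pseudocompactification of $X$ with compact remainder; second, that every pseudocompactification of $X$ with compact remainder arises this way.

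For the first half, I would start by observing that $Y\backslash X=E$ since $E\subseteq K\backslash X$, so the remainder is compact by hypothesis. Density of $X$ in $Y$ is inherited from density of $X$ in $K$, and $Y$ is Tychonoff as a subspace of the compact Hausdorff space $K$. To get pseudocompactness I would invoke Lemma \ref{ASD}: since $\phi$ continuously extends $\mathrm{id}_X$ and $\phi[\beta X\backslash X]=K\backslash X$, the set $\phi^{-1}[Y\backslash X]=\phi^{-1}[E]$ is a closed subset of $\beta X$ containing $\phi^{-1}\big[\phi[\zeta X\backslash X]\big]\supseteq\zeta X\backslash X=\mathrm{cl}_{\beta X}(\beta X\backslash\upsilon X)$, where the last equality is Definition \ref{UTY} together with Theorem \ref{FGA} (local pseudocompactness gives $X\cap R(X)=\emptyset$). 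Hence condition (2) of Lemma \ref{ASD} holds for the compactification $K$ of $Y$, and so $Y$ is pseudocompact. This part is essentially a repackaging of Theorem \ref{IGD}.

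For the second half, let $Y$ be an arbitrary pseudocompactification of $X$ with compact remainder. I would take a compactification $K$ of $X$ in which $Y$ embeds — the natural choice is $K=\beta Y$, which is a compactification of $X$ as well since $X$ is dense in $Y$ and $Y$ is dense in $\beta Y$. Let $\phi:\beta X\to K$ continuously extend $\mathrm{id}_X$. Set $E=Y\backslash X$; this is compact by assumption and is a subset of $K\backslash X$ because $\phi[\beta X\backslash X]=K\backslash X$ and $Y\backslash X\subseteq\beta Y\backslash X$. Then trivially $Y=X\cup E$ as a subspace of $K$. It remains to check that $E\supseteq\phi[\zeta X\backslash X]$; but Lemma \ref{ASD} applied to the pseudocompactification $Y$ gives $\mathrm{cl}_{\beta X}(\beta X\backslash\upsilon X)\subseteq\phi^{-1}[Y\backslash X]$, i.e.\ $\zeta X\backslash X\subseteq\phi^{-1}[E]$, hence $\phi[\zeta X\backslash X]\subseteq E$, as required.

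The main obstacle, such as it is, is bookkeeping rather than mathematics: one must be careful that the subspace topology on $Y=X\cup E$ inside $K$ agrees with the original topology of $Y$ (for $K=\beta Y$ this is immediate, and for a general $K$ it follows because $E$ is compact, so the argument from Lemma \ref{j2} applies), and that $\phi$ behaves correctly on the remainder — in particular that $\phi$ restricts to the identity on $Y\backslash X$ when $K=\beta Y$, which is part of Lemma \ref{j2}. Once these identifications are in place, both inclusions are direct consequences of Lemma \ref{ASD}, and there is no delicate estimate to perform.
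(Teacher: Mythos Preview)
Your proposal is correct and follows essentially the same approach as the paper, which simply records that the result follows from Lemma~\ref{ASD} (and Theorem~\ref{FGA}); you have supplied the details the paper omits, verifying in each direction that condition~(2) of Lemma~\ref{ASD} is exactly the inclusion $\phi[\zeta X\backslash X]\subseteq E$.
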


\begin{proof}
This follows from Lemma \ref{ASD} (and Theorem \ref{FGA}).
\end{proof}

\section {External characterization of $\zeta X$}

For two extensions $Y$ and $Y'$ of a space $X$ we let $Y\leq Y'$ if there exists a continuous mapping of $Y'$ into $Y$ which fixes $X$ pointwise. The relation $\leq$ defines a partial order on the set of all (equivalence classes of) extensions of $X$. (See Section 4.1 of \cite{PW} for more details.)

\begin{theorem}\label{PFAJ}
Let $X$ be a locally pseudocompact space. Then
\[\zeta X=\max\big(\{Y:Y\mbox{ is a pseudocompactification of }X\mbox{ with compact remainder}\},\leq\big).\]
\end{theorem}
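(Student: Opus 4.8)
The plan is to check the two defining properties of a maximum: first, that $\zeta X$ is itself a pseudocompactification of $X$ with compact remainder (so that it belongs to the family in question), and second, that $Y\leq\zeta X$ for every pseudocompactification $Y$ of $X$ with compact remainder. The first property is essentially already in hand: $\zeta X$ is pseudocompact because it densely contains $\alpha X$ (the remark after Definition \ref{TR}), while its remainder $\zeta X\backslash X$ is compact by Theorem \ref{FGA}, since $X$ is locally pseudocompact. Thus $\zeta X$ is a member of the family, and everything reduces to showing it is an upper bound.

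For the upper-bound part, let $Y$ be any pseudocompactification of $X$ with compact remainder, and let $\phi:\beta X\to\beta Y$ be the continuous extension of $\mbox{id}_X$ (this exists since $X$ sits densely in $\beta X$ and $\beta Y$ is compact). Apply Lemma \ref{ASD} with $K=\beta Y$: its hypotheses hold, and condition (1) is satisfied by assumption, so condition (2) yields
\[R(X)=\mbox{cl}_{\beta X}(\beta X\backslash\upsilon X)\subseteq\phi^{-1}[Y\backslash X].\]
Since $\phi$ fixes $X$ pointwise, $\phi[X]=X$, and hence
\[\phi[\zeta X]=\phi[X\cup R(X)]=X\cup\phi[R(X)]\subseteq X\cup\phi\big[\phi^{-1}[Y\backslash X]\big]\subseteq X\cup(Y\backslash X)=Y.\]
Therefore the restriction $\phi|\zeta X$ is a continuous mapping of $\zeta X$ into $Y$ (recall that $Y$ carries its original topology as a subspace of $\beta Y$) which fixes $X$ pointwise; by the definition of $\leq$ this means $Y\leq\zeta X$. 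Combining the two parts, $\zeta X$ lies in the family of all pseudocompactifications of $X$ with compact remainder and dominates every member of that family; since $\leq$ is a partial order on equivalence classes of extensions, $\zeta X$ is its unique maximum.

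I expect essentially no obstacle at the level of this theorem: the substance has already been absorbed into Lemma \ref{j2} (which exhibits $\beta Y$ as a quotient of $\beta X$) and Lemma \ref{ASD} (which translates pseudocompactness of $Y$ into the inclusion $\mbox{cl}_{\beta X}(\beta X\backslash\upsilon X)\subseteq\phi^{-1}[Y\backslash X]$), together with Comfort's characterization of local pseudocompactness in Theorem \ref{FGA}. The only mild subtlety is bookkeeping: one must take $\phi$ with codomain exactly $\beta Y$ so that Lemma \ref{ASD} applies verbatim, and then observe that the displayed inclusion forces $\phi$ to carry $\zeta X$ into $Y$ and not merely into $\beta Y$. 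Alternatively, one may package the upper-bound step through Theorem \ref{IGD}: taking $K=\beta Y$ there, $\phi[\zeta X]$ is the smallest pseudocompactification of $X$ with compact remainder contained in $\beta Y$, hence $\phi[\zeta X]\subseteq Y$, and again $\phi|\zeta X$ witnesses $Y\leq\zeta X$.
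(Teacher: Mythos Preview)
Your proof is correct and follows essentially the same route as the paper's: the paper argues that for any pseudocompactification $Y$ of $X$ with compact remainder, the continuous extension $\phi:\beta X\to\beta Y$ of $\mbox{id}_X$ satisfies $\phi[\zeta X]\subseteq Y$ (quoting the proof of Theorem \ref{IGD}, which in turn is Lemma \ref{ASD}), so $\phi|\zeta X$ witnesses $Y\leq\zeta X$. You unpack the appeal to Theorem \ref{IGD} by invoking Lemma \ref{ASD} directly, and you are slightly more explicit in checking that $\zeta X$ actually belongs to the family---a point the paper leaves implicit---but the argument is the same.
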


\begin{proof}
This is now clear. Note that if $Y$ is a pseudocompactification of $X$ with compact remainder and if $\phi:\beta X\rightarrow\beta Y$ denotes the continuous extension of $\mbox{id}_X$, then $\phi[\zeta X]\subseteq Y$, by the proof of Theorem \ref{IGD}. Therefore $\psi=\phi|\zeta X$ continuously maps $\zeta X$ into $Y$, fixing $X$ pointwise. Thus $Y\leq\zeta X$.
\end{proof}

\section {Internal characterization of $\zeta X$}

Let $X$ be a space and let ${\mathscr F}$ a filter-base in $X$. The set
\[\bigcap_{F\in{\mathscr F}}\mbox{cl}_X F\]
is called the {\em adherence} of ${\mathscr F}$. If ${\mathscr F}$ has an empty adherence then it is called {\em free} in $X$.

\begin{theorem}\label{AEA}
Let $X$ be a locally pseudocompact space. Then $\zeta X$ is the unique Tychonoff extension $Y$ of $X$ with compact remainder satisfying the following conditions:
\begin{itemize}
\item[\rm(1)] A free $z$-ultrafilter in $X$ is free in $Y$ if and only if it has an element contained in a cozero-set of $X$ with pseudocompact closure.
\item[\rm(2)] Distinct $z$-ultrafilters in $X$ have disjoint adherences in $Y$.
\end{itemize}
\end{theorem}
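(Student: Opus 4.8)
The plan is to verify that $\zeta X$ satisfies (1) and (2), and then prove uniqueness by showing that any Tychonoff extension $Y$ of $X$ with compact remainder satisfying both conditions must be equivalent to $\zeta X$. Throughout, I would work inside $\beta X$, where a free $z$-ultrafilter $\mathscr{F}$ in $X$ corresponds to a point $x_{\mathscr{F}}\in\beta X\backslash X$, namely the unique point of $\bigcap_{Z\in\mathscr{F}}\mbox{cl}_{\beta X}Z$, and conversely every point of $\beta X\backslash X$ arises this way. The adherence of $\mathscr{F}$ in an extension $Y$ (with $\phi:\beta X\to\beta Y$ extending $\mathrm{id}_X$) is governed by $\phi(x_{\mathscr{F}})$: by Lemma \ref{j2} the adherence of $\mathscr{F}$ in $Y$ is $\{\phi(x_{\mathscr{F}})\}\cap Y$, which is $\{\phi(x_{\mathscr{F}})\}$ if $\phi(x_{\mathscr{F}})\in Y$ and empty otherwise. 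This translates both conditions into statements about the fibers of $\phi$ over $Y\backslash X$.

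Next I would check that $\zeta X$ satisfies the two conditions, taking $Y=\zeta X$ and $\phi=\mathrm{id}_{\beta X}$, so that $\phi(x_{\mathscr{F}})=x_{\mathscr{F}}$ and the adherence of $\mathscr{F}$ in $\zeta X$ is $\{x_{\mathscr{F}}\}\cap\zeta X$. For (2): distinct $z$-ultrafilters give distinct points $x_{\mathscr{F}}\neq x_{\mathscr{G}}$ of $\beta X$, hence disjoint (possibly empty) adherences in $\zeta X$. For (1): $\mathscr{F}$ is free in $\zeta X$ iff $x_{\mathscr{F}}\notin\zeta X$ iff $x_{\mathscr{F}}\notin R(X)=\beta X\backslash\mbox{int}_{\beta X}\upsilon X$ (using that $x_{\mathscr{F}}\notin X$ and Theorem \ref{FGA}), i.e.\ iff $x_{\mathscr{F}}\in\mbox{int}_{\beta X}\upsilon X$. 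By Lemma \ref{JIUH} this holds iff $x_{\mathscr{F}}\in\mbox{Ex}_X C$ for some $C\in Coz(X)$ with $\mbox{cl}_X C$ pseudocompact; and $x_{\mathscr{F}}\in\mbox{Ex}_X C=\beta X\backslash\mbox{cl}_{\beta X}(X\backslash C)$ is exactly the condition that $X\backslash C\notin\mathscr{F}$, which for a $z$-ultrafilter is equivalent to $C$ containing some element $Z\in\mathscr{F}$. This gives (1).

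For uniqueness, suppose $Y$ is a Tychonoff extension of $X$ with compact remainder satisfying (1) and (2), and let $\phi:\beta X\to\beta Y$ extend $\mathrm{id}_X$; by Lemma \ref{j2}, $\beta Y$ is the quotient of $\beta X$ collapsing each fiber $\phi^{-1}(p)$, $p\in Y\backslash X$. Condition (2) says that for distinct $x_{\mathscr{F}},x_{\mathscr{G}}\in\beta X\backslash X$ with $\phi(x_{\mathscr{F}}),\phi(x_{\mathscr{G}})\in Y$ we cannot have $\phi(x_{\mathscr{F}})=\phi(x_{\mathscr{G}})$; since every point of $\beta X\backslash X$ is some $x_{\mathscr{F}}$, this forces $\phi$ to be injective on $\phi^{-1}[Y\backslash X]$, i.e.\ $\phi$ is a bijection. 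A continuous bijection of the compact space $\beta X$ onto the Hausdorff space $\beta Y$ is a homeomorphism, so $\beta Y=\beta X$ and $\phi=\mathrm{id}_{\beta X}$ (they agree on $X$). Hence $Y$ is (equivalent to) a subspace $X\cup E$ of $\beta X$ with $E\subseteq\beta X\backslash X$ compact. Condition (1), rephrased as above with $\phi=\mathrm{id}$, now says: $x_{\mathscr{F}}\notin Y$ (equivalently $x_{\mathscr{F}}\notin E$) iff $x_{\mathscr{F}}\in\mbox{int}_{\beta X}\upsilon X$. Equivalently, $\beta X\backslash Y=\mbox{int}_{\beta X}\upsilon X\backslash X$, hence $Y=X\cup(\beta X\backslash\mbox{int}_{\beta X}\upsilon X)=\zeta X$.

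The main obstacle is the careful bookkeeping at the interface between $z$-ultrafilters and points of $\beta X$ — in particular justifying that ``$\mathscr{F}$ has an element contained in $C$'' is equivalent to ``$x_{\mathscr{F}}\in\mbox{Ex}_X C$'' for a cozero-set $C$ (one direction uses that $C$ is a countable union of zero-sets and that $\mathscr{F}$ is a $z$-ultrafilter, so a zero-set meeting every element of $\mathscr{F}$ lies in $\mathscr{F}$; the other uses that the complement $X\backslash C$ is a zero-set disjoint from an element of $\mathscr{F}$, hence not in $\mathscr{F}$). Once this dictionary is in place, the deduction of injectivity of $\phi$ from (2), and the identification of $E$ from (1), are essentially formal, resting on Lemma \ref{j2}, Lemma \ref{JIUH} and Theorem \ref{FGA}.
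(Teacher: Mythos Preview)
Your proof is correct and follows essentially the same route as the paper's: both arguments use Lemma~\ref{j2} to show that condition~(2) forces $Y$ to sit inside $\beta X$ (the fibers of $\phi$ over $Y\backslash X$ are singletons), and then translate condition~(1), via the dictionary between free $z$-ultrafilters and points of $\beta X\backslash X$, into the equality $Y\backslash X=\zeta X\backslash X$. The only cosmetic difference is that you invoke Lemma~\ref{JIUH} and unpack the equivalence $x_{\mathscr{F}}\in\mathrm{Ex}_X C\Leftrightarrow(\exists Z\in\mathscr{F})(Z\subseteq C)$ directly, whereas the paper cites Lemma~\ref{BA27}; your version is more explicit but the content is the same.
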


\begin{proof}
Note that for a Tychonoff extension $Y$ of $X$ with compact remainder, condition (2) is equivalent to the requirement that $Y\subseteq\beta X$; as (using Lemma \ref{j2} and its notation) this implies that $\phi^{-1}(p)$ is a singleton for each $p\in Y\backslash X$. But if $Y\subseteq\beta X$, then using Lemma \ref{BA27}, the two implication in (1) simply mean $Y\backslash X\subseteq\zeta X\backslash X$ and $\zeta X\backslash X\subseteq Y\backslash X$, and thus $Y=\zeta X$. That $\zeta X$ satisfies (1) is obvious and follows from Lemma \ref{BA27}.
\end{proof}

Our final theorem is analogous to Theorem 4.37 of \cite{Ko3}. Recall that for any Tychonoff space $X$, if $S,Z\in{\mathscr Z}(X)$ then
\[\mbox{cl}_{\beta X} S\cap\mbox{cl}_{\beta X} Z=\mbox{cl}_{\beta X}(S\cap Z).\]

\begin{theorem}\label{OUF}
Let $X$ be a locally pseudocompact space. Then $\zeta X$ is the unique pseudocompactification $Y$ of $X$ with compact remainder satisfying the following condition:
\begin{itemize}
\item[\rm($\star$)] $\mbox{\em cl}_Y S\cap\mbox{\em cl}_Y Z\subseteq X$ for every $S,Z\in{\mathscr Z}(X)$ such that $S\cap Z$ is contained in a cozero-set of $X$ with pseudocompact closure.
\end{itemize}
\end{theorem}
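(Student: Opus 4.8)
The plan is to verify the two assertions of the statement in turn: that $\zeta X$ satisfies ($\star$), and that ($\star$) characterises $\zeta X$ among the pseudocompactifications of $X$ with compact remainder. (That $\zeta X$ belongs to this class is already recorded in the remark following Definition \ref{TR} and in Theorem \ref{FGA}.) For the first assertion I would work inside $\beta X$, using $\zeta X\subseteq\beta X$: if $S,Z\in{\mathscr Z}(X)$ and $S\cap Z$ is contained in a cozero-set of $X$ with pseudocompact closure, then, since $S\cap Z\in{\mathscr Z}(X)$, Lemma \ref{BA27} gives $\mbox{cl}_{\beta X}(S\cap Z)\subseteq\mbox{int}_{\beta X}\upsilon X$; combining this with the recalled identity $\mbox{cl}_{\beta X}S\cap\mbox{cl}_{\beta X}Z=\mbox{cl}_{\beta X}(S\cap Z)$ and with $\zeta X\backslash X=R(X)=\beta X\backslash\mbox{int}_{\beta X}\upsilon X$ yields
\[\mbox{cl}_{\zeta X}S\cap\mbox{cl}_{\zeta X}Z=\mbox{cl}_{\beta X}(S\cap Z)\cap\zeta X\subseteq\mbox{int}_{\beta X}\upsilon X\cap\zeta X\subseteq X,\]
which is exactly ($\star$).

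For uniqueness, let $Y$ be a pseudocompactification of $X$ with compact remainder satisfying ($\star$), let $\phi\colon\beta X\to\beta Y$ continuously extend $\mbox{id}_X$, and assume $X\neq\emptyset$ (the case $X=\emptyset$ being trivial). The crucial observation is that ($\star$) applies in particular to every pair of disjoint zero-sets of $X$, because $\emptyset$ is contained in a cozero-set of $X$ with pseudocompact closure (such a cozero-set exists since $X$ is nonempty and locally pseudocompact: a cozero-set whose closure lies in an open neighbourhood with pseudocompact closure has regular closed, hence pseudocompact, closure). Since $\mbox{cl}_YS\cap X=S$ for every $S\in{\mathscr Z}(X)$, this forces disjoint zero-sets of $X$ to have disjoint closures in $Y$. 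Consequently, if $\phi^{-1}(p)$ contained two distinct points for some $p\in Y\backslash X$, separating them by disjoint zero-set neighbourhoods in $\beta X$ and intersecting with $X$ would produce disjoint zero-sets of $X$ both having $p$ in their $Y$-closure, a contradiction; hence every fibre $\phi^{-1}(p)$ with $p\in Y\backslash X$ is a singleton, so by Lemma \ref{j2} (as in the proof of Theorem \ref{AEA}) $\beta Y=\beta X$, $\phi=\mbox{id}_{\beta X}$ and $Y\subseteq\beta X$. Lemma \ref{ASD}, applied with $K=\beta X$, now gives $R(X)\subseteq Y\backslash X$, i.e.\ $\zeta X\subseteq Y$.

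For the reverse inclusion I would show $(Y\backslash X)\cap\mbox{int}_{\beta X}\upsilon X=\emptyset$. If some $p$ lay in this set, then by Lemma \ref{JIUH} we would have $p\in\mbox{Ex}_XC$ for some $C\in Coz(X)$ with $\mbox{cl}_XC$ pseudocompact, so $p\notin\mbox{cl}_{\beta X}(X\backslash C)$; picking a continuous $g\colon\beta X\to[0,1]$ with $g(p)=0$ and $g\equiv1$ on $\mbox{cl}_{\beta X}(X\backslash C)$, the set $S=X\cap g^{-1}[[0,1/2]]$ is a zero-set of $X$ with $S\subseteq C$, and (as $p$ lies in the open set $g^{-1}[[0,1/2)]$ and $X$ is dense) $p\in\mbox{cl}_{\beta X}S\cap Y=\mbox{cl}_YS$. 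But $S\cap S=S\subseteq C$, so applying ($\star$) to the pair $(S,S)$ forces $\mbox{cl}_YS\subseteq X$, contradicting $p\in\mbox{cl}_YS\backslash X$. Hence $Y\backslash X\subseteq R(X)$, so $Y\subseteq\zeta X$, and therefore $Y=\zeta X$.

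I expect the main obstacle to be the middle step, where $Y$ gets identified with a subspace of $\beta X$: one must recognise that the degenerate instance $S\cap Z=\emptyset$ of ($\star$) already encodes the property ``disjoint zero-sets of $X$ have disjoint closures in $Y$'', which via Lemma \ref{j2} collapses every fibre of $\phi$ over $Y\backslash X$ to a point. Once that is in place, the two inclusions $\zeta X\subseteq Y$ and $Y\subseteq\zeta X$ fall out routinely from Lemmas \ref{ASD}, \ref{JIUH} and \ref{BA27}. (Alternatively one could check that ($\star$) implies both conditions of Theorem \ref{AEA} and invoke that theorem, but the route above appears shorter.)
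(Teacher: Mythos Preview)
Your proof is correct and follows essentially the same approach as the paper: verify $(\star)$ for $\zeta X$ via Lemma \ref{BA27}, then for a competing $Y$ use the degenerate instance of $(\star)$ on disjoint zero-sets to collapse the fibres of $\phi$ (forcing $Y\subseteq\beta X$ via Lemma \ref{j2}), and finally combine Lemma \ref{ASD} with the case $S=Z$ of $(\star)$ to obtain $Y\backslash X=R(X)$. The paper argues the two inclusions in the opposite order and leaves the construction of the zero-set $S\subseteq C$ with $p\in\mbox{cl}_{\beta X}S$ implicit, but the substance is the same.
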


\begin{proof}
That $\zeta X$ satisfies $(\star)$ is obvious, as by Lemma \ref{BA27}, for every $S,Z\in {\mathscr Z}(X)$ such that $S\cap Z\subseteq C$ for some $C\in Coz(X)$ with pseudocompact closure, we have
\[\mbox{cl}_{\beta X} S\cap\mbox{cl}_{\beta X} Z=\mbox{cl}_{\beta X}(S\cap Z)\subseteq\mbox{int}_{\beta X}\upsilon X.\]

Now suppose that $Y$ is a pseudocompactification of $X$ with compact remainder satisfying $(\star)$. We first show that $Y\subseteq\beta X$ by  (using Lemma \ref{j2} and its notation) showing that $\phi^{-1}(p)$ is a singleton for each $p\in Y\backslash X$. But this follows easily, as otherwise, there exist distinct $a,b\in\phi^{-1}(p)$ and disjoint $S,Z\in{\mathscr Z}(X)$ with $a\in \mbox{cl}_{\beta X}S$ and $b\in \mbox{cl}_{\beta X}Z$. But then $p\in\mbox{cl}_Y S\cap\mbox{cl}_Y Z$, which contradicts $(\star)$. (Note that $X$, being locally pseudocompact, contains a cozero-set with pseudocompact closure.) Condition $(\star)$ in particular implies that $\mbox{cl}_{\beta X}S\cap(Y\backslash X)=\emptyset$, for every $S\in{\mathscr Z}(X)$ contained in a cozero-set of $X$ with pseudocompact closure, thus $Y\backslash X\subseteq\mbox{cl}_{\beta X}(\beta X\backslash\upsilon X)$. The reverse inclusion in the latter follows from Lemma \ref{ASD}. Therefore $Y=\zeta X$.
\end{proof}

\noindent{\bf Acknowledgements.} The author wishes to thank the referee for his/her comments, which led to some simplifications of proofs and rephrasements of some results, considerably improving the overall exposition of the article.

\end{document}